\def\cqfd{
{\hfill
\kern 6pt\penalty 500
\raise -1pt\hbox{\vrule\vbox to 5pt{\hrule width 4pt
\vfill\hrule}\vrule}}
\break}
\newtheorem{theorem}{Theorem}
\newtheorem{proposition}[theorem]{Proposition}
\newtheorem{corollary}[theorem]{Corollary}
\newtheorem{remark}[theorem]{Remark}
\newtheorem{definition}[theorem]{Definition}
\begin{document}

%Colorable list of ponderate graph theory, and

\title[]{Vectorial solutions to list multicoloring problems on graphs}
\author{Yves Aubry, Jean-Christophe Godin and Olivier Togni}
\address{Institut de Math\'ematiques de Toulon and Institut de Math\'ematiques de Luminy, Universit\'e du Sud Toulon-Var,  France,  and Laboratoire LE2I, Universit\'e de Bourgogne, France}

\email{yves.aubry@univ-tln.fr, godinjeanchri@yahoo.fr 
and olivier.togni@u-bourgogne.fr}

\subjclass[2010]{05C15, 05C38}

\keywords{weighted graph, coloration, channel assignment problem.}

\date{\today}

\begin{abstract}
For a graph $G$ with a given list assignment $L$ on the vertices, we give an algebraical description of the set  of
all  weights $w$ such that $G$ is 
 $(L,w)$-colorable, called permissible weights.
Moreover, for a graph $G$ with a given list $L$ and a given permissible weight $w$, we describe the set  of all 
 $(L,w)$-colorings of $G$. By the way, we solve the {\sl channel assignment problem}. Furthermore, we describe the set of solutions to
 the {\sl on call problem}: when $w$ is not a permissible weight, we find all the nearest permissible weights $w'$. Finally, we give a solution to the non-recoloring problem keeping a given subcoloring.
 \end{abstract}

\maketitle

%%%%%%%%%%%%%%%%%%%%%%%%%%%%%%%%%%%%%%

%%%%%%%%%%%%%%%%%%%%%%%%%%%%%%%%%%%%%%%%%%%%%%%%%%%%%%%%%%%%%%%%%%%
\section{Introduction}

It is convenient to model cellular data and communication networks as graphs with each node representing a base station in a cell in the network and edges representing geographical adjacency of cells. Moreover, we associate to each vertex in the graph a set of calls  in the cell served by the node corresponding to this vertex. 

The \textsl{channel assignment problem} (see  \cite{KatzalaNaghsineh1996}, \cite{JaumardMarcotteMeyer1999} and 
\cite{Hale1980}) is, at a given  time instant, to assign a number $w(v)$ of channels to each node $v$ in the network in
such a way that co-channel interference constraints are respected, and the total number of channels used over all nodes
in the network is minimized. 

The problem is related to the following graph multicoloring problem:
for a graph $G$ with a given list assignment $L$, find the weights $w$ such that $G$ is 
 $(L,w)$-colorable (see below for a precise definition of colorability). We will call such a weight $w$ a \textsl{permissible weight}.
 
 The purpose of this paper is to describe the set of all permissible weights $w$ and then to give a construction of  all 
 $(L,w)$-colorings of $G$. In particular we  solve the channel assignment problem which can be seen as: for a given graph $G$ and a given weight $w$ of $G$, find the 
weighted chromatic number $\chi(G,w)$ and furthermore find an $({\mathcal L}_{\chi(G,w)},w)$-coloring of $G$ (where 
${\mathcal L}_{\chi(G,w)}(v)=\{1,2,\ldots,\chi(G,w)\}$ for every vertex $v$ of $G$).

 Additionally, the description of the set of all permissible weights enable us to solve the {\sl on call problem}:  when
$w$ is not a permissible weight, we find all the nearest permissible weights $w'$. This is the situation we meet when
the network is exceptionally saturated as for the December 31st.

Finally,  we consider the {\sl non-recoloring problem} which arises when we want to extend a pre-coloring.

Note that although all our proofs are constructive (and hence algorithms can be derived from them), the
purpose of the present paper is not to compete with existing graph coloring algorithms such as those of Byskov
\cite{Byskov2003,Byskov2004} for the unweighted case or the one of Caramia and Dell'Olmo \cite{CaramiaOlmo2001} that
computes the weighted chromatic number. Our setting is more ambitious since we consider the list coloring problem on 
weighted graphs, for which, to our knowledge, no general algorithm exists.

 \bigskip
 
 The paper is organized as follows. We  develop a vectorial point of view in section \ref{A vectorial point of view}. In particular   for any color $x$  arising in $L$ we introduce the induced subgraph $G^x$ of $G$ whose vertices are those which have $x$ as a color in their list. After introducing a partial order in ${\mathbb N}^n$, we define hyperrectangles built on a finite set of vectors. Then, we define the set $\overrightarrow{W}_{max}(G,L)$ of a graph $G$ with a given list $L$: it is the set of sums of the  maximal independent vectors of all  the subgraphs $G^x$. This set will be a fundamental object in our results because it will be shown that this is the set of weight-vectors $\vec{w}$ which give a maximal $(L,w)$-coloring of $G$.
In Section \ref{Howtofindallthecolorings}   
  we consider a graph $G$ with a given list $L$ and a permissible weight $w$, and we describe the   set ${\mathcal C}(G,L,w)$ of all $(L,w)$-colorings of $G$. It gives an explicit answer to the channel assignment problem as shown in Section \ref{static}.
Then, Section \ref{Thesetofallpossibleweightvectors} is devoted to the second main result, namely, for a graph $G$ with
a given list $L$,  the characterization of the set $\overrightarrow{W}(G,L)$ of permissible weight-vectors $\vec w$
(i.e. such that $G$ is $(L,w)$-colorable). 
 We prove (Theorem \ref{W(G,L)}) that $\overrightarrow{W}(G,L)$ is the hyperrectangle of $\overrightarrow{W}_{max}(G,L)$.
%:$$\overrightarrow{W}(G,L)=R(\overrightarrow{W}_{max}(G,L)).$$
Section \ref{The set of all minimal rejected weights} is concerned with the {\sl on call problem}. We describe the set
of weights which give an answer to this problem.
Section \ref{nonrecoloring} focuses on the non-recoloring problem and Section \ref{Algorithmic considerations} deals
with some algorithmic considerations.

\bigskip
\bigskip

In the paper, all the graphs are simple, undirected and with a finite number of vertices.

If $G$ is a graph, we denote by $V(G)$  the set of its vertices and by $E(G)$ the set of its edges. 

A list assignment (called simply a list) of $G$ is a map $L : V(G) \rightarrow \mathcal{P}({\mathbb N})$: to each vertex $v$ of $G$, we associate a finite set of integers which can be viewed as possible colors that can be chosen on $v$. 

If $a$ is an integer $\geq 1$, we define  the $a$-uniform list ${\mathcal L}_a$ of $G$ by: for every vertex $v$ of $G$,
$${\mathcal L}_a(v)=\{1,2,\ldots,a\}.$$

A weight of $G$ is a map $w : V(G) \rightarrow {\mathbb N}$: to each vertex $v$ of $G$, we associate an integer which can be viewed as the number of wanted colors on $v$. 

The cardinal of a finite set $A$ will be denoted by $\vert A\vert$.

We recall in the next definition what we mean exactly by an $(L,w)$-coloring of a graph, the central notion of this paper.

\begin{definition}
Let $G$ be a graph with a given list $L$ and a given weight $w$.
An $(L,w)$-coloring $C$ of a graph $G$ is a map $C$ that associate to each vertex $v$ exactly $w(v)$ colors from $L(v)$ and such that adjacent vertices receive disjoints color sets, i.e. for all $v \in V(G)$:
$$ C(v) \subset L(v) \ , \ \vert C(v) \vert = w(v)$$
and for all $vv' \in E(G) :$
$$ C(v) \cap C(v') = \emptyset \ .$$

We say that  $G$ is $(L,w)$-colorable  if there exists an $(L,w)$-coloring  of $G$. 
\end{definition}

%%%%%%%%%%%%%%%%%%%%%%%%%%%%%%%%%%%%%%%%%%%%%%%%%%%%%%%%%%%%%%%%%%%
%%%%%%%%%%%%%%%%%%%%%%%%%%%%%%%%%%%%%%%%%%%%%%%%%%%%%%%%%%%%%%%%%%%

\section{A vectorial point of view}
When dealing with graphs with $n$ vertices, we will work with vectors with integer coordinates in the vector space ${\mathbb R}^n$.
\label{A vectorial point of view}

%%%%%%%%%%%%%%%%%%%%%%%%%%%%%%%%%%%%%%%%%%%%%%%%%%%%%%%%%%%%%%%%%%%
\subsection{The vectorial decomposition}
%\label{Vectorial}

For any $n \in {\mathbb N}$, let us set 
$${\mathbb N}^n:={\mathbb N}\vec{e_1}+\cdots{\mathbb N}\vec{e_n}$$
 where $(\vec{e_i})_{1\leq i\leq n}$ is a basis of the ${\mathbb R}$-vector space ${\mathbb R}^n$.
 For any vector $\vec{x}=\sum_{i=1}^nx_i\vec{e}_i$ we consider the norm $\| \vec{x} \| = \sum_{i=1}^n x_i$.

Let $G$ be a graph with $n$ vertices $v_1,\ldots,v_n$, let $L$ be a list of $G$ and let $w$ be a weight of $G$.
Let us introduce some notation.

For any subset $N$ of the set $V(G)$ of vertices of $G$, we associate the vector 
$$\vec{N}=\sum_{i=1}^n\lambda_i\vec{e_i}\in{\mathbb N}^n$$
 defined by: $\lambda_i=1$ if $v_i\in N$ and 0 if $v_i\not\in N$.

We define the set of all colors of $L$, by:
$${\overline L}:=\bigcup_{v\in V(G)}L(v) \ ,$$
and we define  $\tilde L\in (\mathcal{P}({\mathbb N}))^n$ the $n$-tuple  of sets:
$$\tilde{L}:=(L(v_1),\ldots, L(v_n)).$$

\noindent
For any list $L'$ of $G$, we define the union-list  $\tilde{L}\ \tilde{\cup}\ \tilde{L'}$, by:
$$\tilde{L}\ \tilde{\cup}\ \tilde{L'}:=(L(v_1)\cup L'(v_1),\ldots, L(v_n)\cup L'(v_n)).$$

\noindent
For any   $(L,w)$-coloring $C$ of $G$, we define its weight-vector  $\vec{w}(C)\in{\mathbb N}^n$ by:
$$\vec{w}(C) := \sum_{i=1}^{n} \vert C(v_i) \vert \ \vec{e}_i=\sum_{i=1}^nw(v_i)\vec{e_i}.$$

\begin{definition}
For any color $x\in\overline{L}$ and any   $(L,w)$-coloring $C$ of $G$, we define the $x$-color sublist $C^x$ as the list of the graph $G$ defined by: for any $v_i\in V(G)$, $C^x(v_i)=\{x\}$ if $x\in C(v_i)$, and $C^x(v_i)=\emptyset$ otherwise.\\
\end{definition}

The following proposition gives the decomposition of any coloring in terms of its $x$-color sublists.

\begin{proposition}[]
\label{decomposition}
For any graph $G$, any list $L$ of $G$ and any weight $w$ of $G$, if $C$ is  an $(L,w)$-coloring of $G$, then:
$$\tilde{C}=\tilde{\bigcup}_{x\in\overline{L}}\tilde{C^x} \ , \ and \ \vec{w}(C)=\sum_{x\in\overline{L}}\vec{w}(C^x).$$
\end{proposition}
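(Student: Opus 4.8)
The plan is to prove both identities coordinate-by-coordinate, that is, vertex by vertex, since both the union-list operation $\tilde{\cup}$ and the weight-vector $\vec{w}$ are defined componentwise. I would fix an arbitrary vertex $v_i$ and analyze the family of sets $\{C^x(v_i)\}_{x\in\overline{L}}$ attached to it.

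First I would establish the list identity. By definition of the $x$-color sublist, for each color $x$ the set $C^x(v_i)$ equals $\{x\}$ when $x\in C(v_i)$ and $\emptyset$ otherwise. Taking the union over all $x\in\overline{L}$ therefore collects exactly those singletons $\{x\}$ for which $x\in C(v_i)$, so that $\bigcup_{x\in\overline{L}} C^x(v_i) = C(v_i)$. The only point requiring care is that the index set $\overline{L}$ is large enough to see every color appearing on $v_i$; this holds because $C(v_i)\subset L(v_i)\subset\overline{L}$, so no color of $C(v_i)$ is omitted. As this is valid for every $i$, the componentwise definition of $\tilde{\cup}$ yields $\tilde{C} = \tilde{\bigcup}_{x\in\overline{L}}\tilde{C^x}$.

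Next I would deduce the weight-vector identity. The key observation is that, for a fixed $v_i$, the sets $\{C^x(v_i)\}_{x\in\overline{L}}$ are pairwise disjoint: each is either empty or a singleton $\{x\}$, and distinct colors give distinct singletons. Hence the union computed above is in fact a disjoint union, and its cardinalities add: $\sum_{x\in\overline{L}}\vert C^x(v_i)\vert = \vert C(v_i)\vert = w(v_i)$. Multiplying by $\vec{e}_i$, summing over $i$, and then exchanging the two finite sums gives
$$\sum_{x\in\overline{L}}\vec{w}(C^x)=\sum_{x\in\overline{L}}\sum_{i=1}^n\vert C^x(v_i)\vert\,\vec{e}_i=\sum_{i=1}^n\Big(\sum_{x\in\overline{L}}\vert C^x(v_i)\vert\Big)\vec{e}_i=\sum_{i=1}^n w(v_i)\,\vec{e}_i=\vec{w}(C).$$

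I do not anticipate a genuine obstacle here, since the statement is essentially a bookkeeping decomposition. If anything, the one place to be careful is the passage from a set union to a sum of cardinalities, which is exactly what the pairwise disjointness of the $C^x(v_i)$ provides; everything else is a direct unwinding of the definitions of $C^x$, $\tilde{\cup}$, and $\vec{w}$.
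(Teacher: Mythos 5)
Your proof is correct and follows essentially the same route as the paper's: the union identity $C(v)=\bigcup_{x\in\overline{L}}C^x(v)$ by unwinding the definition of $C^x$, and the weight identity from the pairwise disjointness of the sets $C^x(v)$ for distinct colors. Your version merely spells out two details the paper leaves implicit (the inclusion $C(v_i)\subset L(v_i)\subset\overline{L}$ and the interchange of the finite sums), which is fine.
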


\begin{proof}
For any color $x \in \overline{L}$, consider the $x$-color sublist $C^x$ of the graph $G$ defined above.
 By construction we have, for any vertex $v \in V(G)$, $C(v)=\bigcup_{x \in \overline{L} } C^x(v)$, therefore $\tilde{C}=\tilde{\bigcup}_{x\in\overline{L}}\tilde{C^x}$. For any $x,y \in \overline{L}$ such that $x \not= y$, we have for any vertex $v \in V(G)$, $C^x(v) \cap C^y(v) = \emptyset$, therefore $\vec{w}(C)=\sum_{x\in\overline{L}}\vec{w}(C^x)$.
\end{proof}

\begin{definition}
For any graph $G$ and any color $x\in\overline{L}$, we define the $x$-color subgraph $G^x$ to be the induced subgraph of $G$ defined by: 
$v\in V(G^x)$ if  and only if  $x\in L(v).$
\end{definition}

Remark that if $L^x$ denotes the list of the graph $G^x$ defined by: $L^x(v)=\{x\}$ for any $v\in V(G^x)$, then $C^x$ is an $(L^x,w(C^x))$-coloring of $G^x$.

%%%%%%%%%%%%%%%%%%%%%%%%%%%%%%%%%%%%%%%%%%%%%%%%%%%%%%%%%%%%%%%%%%%%

\subsection{Hyperrectangles}

In order to define the hyperrectangles, let us introduce a (partial) order in ${\mathbb N}^n$.

For any vectors $\vec{x} \ , \vec{y} \  \in{\mathbb N}^n$, we say that 
$$\vec{y}=\sum_{i=1}^ny_i\vec{e_i}\leq \vec{x}=\sum_{i=1}^nx_i\vec{e_i}$$
 if and only if  $y_i\leq x_i$ for all $1\leq i\leq n$. 
 
 For any vector $\vec{x}\in{\mathbb N}^n$, we define the hyperrectangle of $\vec{x}$ (see Fig.~\ref{fig1}) by:
$$R(\vec{x}):=\{  \vec{y}\in{\mathbb N}^n \ \mid \  \vec{y}\leq \vec{x}\} \ .$$

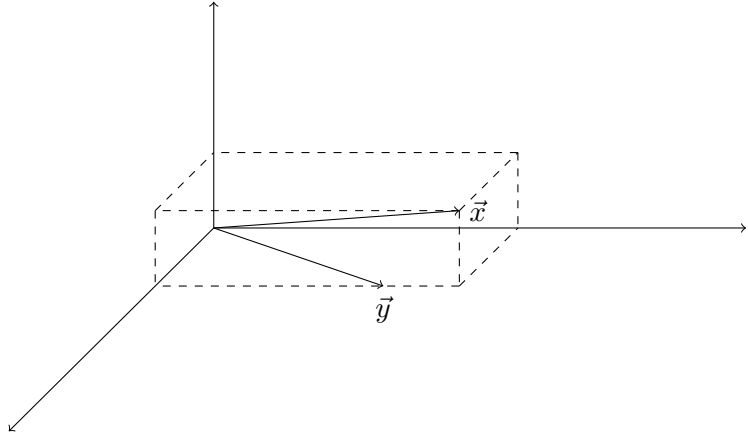
\begin{figure}[Ht]
\begin{tikzpicture} 

% Trace l'axe des x :
\draw[->] (0,0,0) -- (7,0,0); 
\draw (7,0,0) node[right] {$$}; 

% Trace l'axe des y :
\draw [->] (0,0,0) -- (0,3,0); 
\draw (0,3,0) node[above] {$$}; 

% Trace l'axe des z :
\draw [->] (0,0,0) -- (0,0,7); 
\draw (0,0,7) node[left] {$$}; 

% Note le point w_1 :
\draw (4,1,2) node[right]{$\vec{x}$} ; 

% Dessine le vecteur w_1 :
\draw[->] (0,0,0) -- (4,1,2); 

% Dessine le vecteur w=(3,0,2) :
\draw[->] (0,0,0) -- (3,0,2); 

% Note le point w :
\draw (3,0,2) node[below]{$\vec{y}$} ; 

% Dessine en pointills les projections du point (4,1,2) sur les axes :
\draw [dashed] (4,1,2) -- (4,1,0) node[below] {$$}; 
\draw [dashed] (4,1,2) -- (4,0,2) node[left] {$$}; 
\draw [dashed] (4,1,2) -- (0,1,2) node[left] {$$}; 

% Dessine en pointills les projections du point (4,0,1) sur les axes :
\draw [dashed] (4,0,2) -- (4,0,0) node[below] {$$}; 
\draw [dashed] (4,0,2) -- (0,0,2) node[left] {$$}; 

% Dessine en pointills les projections du point (0,1,2) sur les axes :
\draw [dashed] (0,1,2) -- (0,0,2) node[below] {$$}; 
\draw [dashed] (0,1,2) -- (0,1,0) node[left] {$$}; 

% Dessine en pointills les projections du point (4,1,0) sur les axes :
\draw [dashed] (4,1,0) -- (4,0,0) node[below] {$$}; 
\draw [dashed] (4,1,0) -- (0,1,0) node[left] {$$};

\end{tikzpicture} 
\caption{\label{fig1} A vector $\vec{y}$ in the hyperrectangle of $\vec{x}$.}
\end{figure}

For a finite set of vectors $X=\{\vec{x_1},\ldots,\vec{x_k}\}$ with $\vec{x_i}\in{\mathbb N}^n$, $1\leq i\leq k$, we define the hyperrectangle of $X$, by:
$$R(X)=R( \{ \vec{x_1},\ldots,\vec{x_k} \} ):=\bigcup_{i=1}^kR(\vec{x_i}) \ .$$

%%%%%%%%%%%%%%%%%%%%%%%%%%%%%%%%%%%%%%%%%%%%%%%%%%%%%%%%%%%%%%%%%%%
%%%%%%%%%%%%%%%%%%%%%%%%%%%%%%%%%%%%%%%%%%%%%%%%%%%%%%%%%%%%%%%%%%%

%%%%%%%%%%%%%%%%%%%%%%%%%%%%%%%%%%%%%%%%%%%%%%%%%%%%%%%%%%%%%%%%%%%%

\subsection{The set of maximal independent sets of a graph}

Recall that an independent (or stable) set $S$ of a graph $G$ is a subset of $V(G)$ such that $ vv' \notin E(G)$ for any $v,v' \in S$. A maximal independent set of a graph $G$ is an independent set $S$ of $G$ of maximal cardinality, i.e. $S$ is not the proper subset of another independent set.

Now, let $G$ be a graph and $L$ be a list of $G$.
For any color $x$ arising in $L$, we consider the maximal independent sets of the subgraphs $G^x$. They are subsets of $V(G)$ and we consider their associated vectors. Summing all these vectors, we obtain the following definition of the  set  $\overrightarrow{W}_{max}(G,L) $ which will be central in this paper.

For any sets $X_1,\ldots, X_k$  of vectors of ${\mathbb R}^n$, we recall that the vectorial sum is defined by:
$$\sum_{i=1}^kX_i := \{\vec{x}_1+\cdots +\vec{x}_k\ \vert \  \vec{x}_1\in X_1,\ldots, \vec{x}_k \in X_k\} \ .$$

Recall also that if $N$ is a subset of $V(G)=\{v_i,\ldots,v_n\}$, then
the vector $\vec N$ is defined by 
$\vec{N}=\sum_{i=1}^n\lambda_i\vec{e_i}$ with
 $\lambda_i=1$ if $v_i\in N$ and 0 otherwise.
 
 If $x \in \overline{L}$ and $H$ is a subgraph of $G$ we define:
 $$\overrightarrow{MIS}(H) := \{\vec{N} \in{\mathbb N}^{\vert V(G) \vert} \mid N \ {\rm is \ a \ maximal \ independent \ set \ of}\  H \}.$$

\begin{definition}
For any graph $G$ and any list $L$ of $G$, we define

$$\overrightarrow{W}_{max}(G,L) := \sum_{x \in \overline{L} } \overrightarrow{MIS}(G^x).$$

\end{definition}

%%%%%%%%%%%%%%%%%%%%%%%%%%%%%%%%%%%%%%%%%%%%%%%%%%%%%%%%%%%%%%%%%%%
%%%%%%%%%%%%%%%%%%%%%%%%%%%%%%%%%%%%%%%%%%%%%%%%%%%%%%%%%%%%%%%%%%%

\section{The set of all colorings} 
\label{Howtofindallthecolorings}

For any graph $G$,  any list $L$ of $G$ and any weight $w$ of $G$, we consider the   set ${\mathcal C}(G,L,w) $ of all $(L,w)$-colorings of $G$.

The purpose of this section is to give a description of this set. In order to do it, we define the maximal $(L,w)$-colorings set:

$${\mathcal C}(G,L,w)_{max} := \{ C \in  {\mathcal C}(G,L,w)   \mid \forall x \in \overline{L}, \ \vec{w}(C^x) \in  \overrightarrow{MIS}(G^x)\}.$$
>From Proposition \ref{decomposition}, we have the following property:

\begin{equation*}
{\mathcal C}(G,L,w)_{max} \not= \emptyset \iff \vec{w} \in \overrightarrow{W}_{max}(G,L).\hskip2cm (\ast)
\end{equation*}

For any $C \in {\mathcal C}(G,L,w)$ and any $\vec{d} \in \mathbb{N}^n$, we define the  subcoloring set:
$$ {\mathcal C}^-(C,\vec{d}) := \{ C' \mid  \forall v \in V(G):C'(v) \subset C(v), \ and \ \vec{w}(C')=\vec{w}(C)-\vec{d}  \}.$$

\begin{theorem}
\label{propdecompositionC}
For any graph $G$, any list $L$ of G and any permissible weight $w$  of $G$ i.e. such that $G$ is $(L,w)$-colorable, we have: 
$$ {\mathcal C}(G,L,w)= \bigcup \limits_{\underset{\vec{w}' \geq\vec{w}}{\vec{w}' \in \overrightarrow{W}_{max}(G,L)}} \  \bigcup_{C' \in {\mathcal C} (G,L,w')_{max}  } {\mathcal C}^{-}(C',\vec{w}' - \vec{w} ) .$$
\end{theorem}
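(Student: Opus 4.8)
The plan is to establish the set equality by proving the two inclusions separately. The inclusion $\supseteq$ is the routine one and relies only on unwinding the definitions. Fix $\vec{w}' \in \overrightarrow{W}_{max}(G,L)$ with $\vec{w}' \geq \vec{w}$, a coloring $C' \in {\mathcal C}(G,L,w')_{max}$, and some $C \in {\mathcal C}^{-}(C', \vec{w}' - \vec{w})$. By definition of the subcoloring set, $C(v) \subseteq C'(v) \subseteq L(v)$ for every $v$, and $\vec{w}(C) = \vec{w}(C') - (\vec{w}' - \vec{w}) = \vec{w}$ since $\vec{w}(C')=\vec{w}'$; in particular each $\vert C(v)\vert = w(v)$. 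Properness is then inherited: for an edge $vv' \in E(G)$ one has $C(v) \cap C(v') \subseteq C'(v) \cap C'(v') = \emptyset$. Hence $C \in {\mathcal C}(G,L,w)$, which proves this inclusion.

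The substantive direction is $\subseteq$, where the idea is to complete an arbitrary $(L,w)$-coloring into a maximal one. Given $C \in {\mathcal C}(G,L,w)$, I would first invoke Proposition \ref{decomposition} to decompose it through its color sublists $C^x$, $x \in \overline{L}$. For each color $x$, the set $S_x := \{v \in V(G) \mid x \in C(v)\}$ is an independent set of the subgraph $G^x$: every such vertex lies in $G^x$ because $x \in L(v)$, and two adjacent such vertices would violate the color-disjointness of $C$. The key step is then to invoke the fundamental fact that every independent set is contained in an inclusion-maximal one, and thereby extend each $S_x$ to a maximal independent set $S'_x$ of $G^x$ with $S_x \subseteq S'_x$.

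I would then reassemble these into a coloring $C'$ defined by $C'(v) := \{x \in \overline{L} \mid v \in S'_x\}$, equivalently by declaring its $x$-color sublist to be the vertex set $S'_x$. One checks that $C'$ is a genuine $(L,w')$-coloring with $\vec{w}' := \vec{w}(C') = \sum_{x \in \overline{L}} \vec{S'_x}$: membership in the list is automatic since $v \in S'_x$ forces $x \in L(v)$, and properness holds color-by-color because each $S'_x$ is independent in the induced graph $G^x$, so no two adjacent vertices share the color $x$. Since $\vec{S'_x} \in \overrightarrow{MIS}(G^x)$ for every $x$, we obtain $\vec{w}' \in \overrightarrow{W}_{max}(G,L)$ and $C' \in {\mathcal C}(G,L,w')_{max}$. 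Finally, $S_x \subseteq S'_x$ gives $C(v) \subseteq C'(v)$, hence $\vec{w} \leq \vec{w}'$, and because $\vec{w}(C) = \vec{w} = \vec{w}' - (\vec{w}' - \vec{w})$ we conclude $C \in {\mathcal C}^{-}(C', \vec{w}' - \vec{w})$, placing $C$ in the right-hand union.

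The main obstacle is conceptual rather than computational: one must recognize that the per-color structure decouples the problem, so that maximality of the global coloring is achieved by independently saturating each color class $S_x$ to a maximal independent set of $G^x$, and that this saturation never disturbs the colors already chosen (adding $x$ to a vertex of $S'_x \setminus S_x$ cannot clash with another vertex carrying $x$, since both would then lie in the independent set $S'_x$). Care is needed only to verify that the reassembled $C'$ is proper and that its weight-vector lands exactly in $\overrightarrow{W}_{max}(G,L)$; both follow at once from the fact that properness and the maximal-independent-set condition are imposed separately on each color subgraph $G^x$.
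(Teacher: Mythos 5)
Your proposal is correct and takes essentially the same route as the paper's own proof: in both arguments the forward inclusion is obtained by extending each color class $S_x$ (the paper's $N^x$) to a maximal independent set of $G^x$ and reassembling these into a saturated coloring $C'$ with $\vec{w}(C'^x)=\vec{S'_x}$, while the reverse inclusion follows by unwinding the definition of ${\mathcal C}^{-}(C',\vec{w}'-\vec{w})$. If anything, you are slightly more explicit than the paper in checking that the reassembled $C'$ is proper and list-respecting and that subcolorings inherit properness, which the paper leaves implicit.
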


\bigskip

\begin{proof}
Let $G$ be a graph, $L$ a list of $G$ and $w$ a permissible weight.  If $C \in {\mathcal C}(G,L,w)$, then for any $x \in \overline{L}$, we define the subset $N^x$ of $V(G)$ such that $\vec{N}^x=\vec{w}(C^x)$. Therefore $N^x$ is an independent set of $G^x$, and there exists  a maximal independent set $S^x$ of $G^x$ such that $N^x \subset S^x$. We construct a list $C'$ of $G$ such that for any $x\in \overline{L}$, $\vec{w}(C'^x)=\vec{S}^x$. By construction $\vec{w}' = \vec{w}(C')=\sum_{x \in \overline{L}} \vec{w}(C'^x) \in \overrightarrow{W}_{max}(G,L)$, then $C' \in {\mathcal C}(G,L,w')_{max} $. Since $N^x \subset S^x$, then $\vec{w}=\sum_{x \in \overline{L} } \vec{w}(C^x) \leq \sum_{x \in \overline{L} } \vec{w}(C'^x)=\vec{w}'$, and by construction $C \in {\mathcal C}^{-}(C',\vec{w}' - \vec{w} )$. Hence we have the first inclusion.\\
If $\vec{w}' \in \overrightarrow{W}(G,L)$ such that $\vec{w}' \geq \vec{w}$, and $C' \in {\mathcal C} (G,L,w')_{max} $. For any $C'' \in {\mathcal C}^{-}(C',\vec{w}' - \vec{w} )$, we have $\vec{w}(C'')=\vec{w}'-( \vec{w}' - \vec{w})=\vec{w} $, therefore $C'' \in {\mathcal C}(G,L,w)$, and we have the reverse inclusion.
\end{proof}

%%%%%%%%%%%%%%%%%%%%%%%%%%%%%%%%%%%%%%%%%%%%%%%%%%%%%%%%%%%%%%%%%%%
%%%%%%%%%%%%%%%%%%%%%%%%%%%%%%%%%%%%%%%%%%%%%%%%%%%%%%%%%%%%%%%%%%%

\section{The set of all weights} 
\label{Thesetofallpossibleweightvectors}

Now, we can state the main result of the paper: the set $\overrightarrow{W}(G,L)$ of all possible weights vectors $\vec{w}$ such
that $G$ is $(L,w)$-colorable is given by the hyperrectangle of the set $\overrightarrow{W}_{max}(G,L)$:

\begin{theorem}[]
\label{W(G,L)}
For any graph $G$ and any list $L$ of $G$:
$$\overrightarrow{W}(G,L)=R(\overrightarrow{W}_{max}(G,L)).$$
In other words, the graph $G$ is $(L,w)$-colorable if and only if the vector $\vec{w}$ belongs to the hyperrectangle constructed on the maximal independent sets of the subgraphs $G^x$'s.
\end{theorem}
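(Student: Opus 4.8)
The plan is to prove the theorem by establishing the two inclusions $\overrightarrow{W}(G,L)\subseteq R(\overrightarrow{W}_{max}(G,L))$ and $R(\overrightarrow{W}_{max}(G,L))\subseteq \overrightarrow{W}(G,L)$ separately, with the vectorial decomposition of Proposition \ref{decomposition} serving as the main bridge between colorings and vectors.

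For the inclusion $\overrightarrow{W}(G,L)\subseteq R(\overrightarrow{W}_{max}(G,L))$, I would start from a permissible weight $\vec w$, so there exists an $(L,w)$-coloring $C$. By Proposition \ref{decomposition} we have $\vec{w}(C)=\sum_{x\in\overline{L}}\vec{w}(C^x)$. The key observation is that for each color $x$, the set $N^x$ of vertices receiving color $x$ (i.e. the support of $C^x$) is an \emph{independent} set of $G^x$, since adjacent vertices must receive disjoint colors and $N^x\subseteq V(G^x)$ by definition of the $x$-color subgraph. Each such independent set is contained in some maximal independent set $S^x$, whence $\vec{w}(C^x)=\vec{N}^x\leq\vec{S}^x$ with $\vec{S}^x\in\overrightarrow{MIS}(G^x)$. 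Summing over $x$ gives $\vec{w}\leq\sum_{x\in\overline{L}}\vec{S}^x\in\overrightarrow{W}_{max}(G,L)$, which is exactly the statement that $\vec{w}\in R(\overrightarrow{W}_{max}(G,L))$. This direction essentially repeats the construction already used in the proof of Theorem \ref{propdecompositionC}.

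For the reverse inclusion $R(\overrightarrow{W}_{max}(G,L))\subseteq\overrightarrow{W}(G,L)$, I would take $\vec{w}\in R(\overrightarrow{W}_{max}(G,L))$, so there is some $\vec{w}'\in\overrightarrow{W}_{max}(G,L)$ with $\vec{w}\leq\vec{w}'$, and $\vec{w}'=\sum_{x\in\overline{L}}\vec{S}^x$ for maximal independent sets $S^x$ of the $G^x$. The crucial point is that \emph{distinct colors may reuse the same vertices}: for each $x$ I assign the single color $x$ to every vertex of $S^x$, producing a coloring $C'$ in which adjacent vertices never share a color (because each $S^x$ is independent in $G^x$), so $C'$ realizes the weight $\vec{w}'$. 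Since $\vec{w}\leq\vec{w}'$ coordinatewise, I can then delete colors vertex by vertex to descend from weight $\vec{w}'$ to $\vec{w}$; deleting colors never creates a conflict, so the result is a valid $(L,w)$-coloring, proving $\vec{w}\in\overrightarrow{W}(G,L)$. Formally this is the content of property $(\ast)$ together with Theorem \ref{propdecompositionC}: the nonemptiness of ${\mathcal C}(G,L,w')_{max}$ and the subcoloring construction ${\mathcal C}^-(C',\vec{w}'-\vec{w})$ together guarantee a coloring at weight $\vec{w}$.

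\textbf{The main obstacle} I anticipate is ensuring that the colors assigned across different subgraphs $G^x$ genuinely combine into a single well-defined coloring without hidden conflicts. One must check that using the \emph{same} color $x$ only on an independent set of $G^x$ is both necessary and sufficient, and that no adjacency constraint is violated when the per-color lists are merged via the union-list operation $\tilde{\cup}$; this is precisely where Proposition \ref{decomposition} is indispensable, since it guarantees the decomposition is into genuinely disjoint color contributions. The descent step (removing $\vec{w}'-\vec{w}$ colors) is routine once one observes that any subcoloring of a valid coloring is valid, but it is worth stating explicitly to avoid the appearance that maximality of the $S^x$ is essential for colorability rather than merely for reaching the extremal weights in $\overrightarrow{W}_{max}(G,L)$.
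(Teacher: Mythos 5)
Your proposal is correct and takes essentially the same approach as the paper: the paper's proof simply delegates both inclusions to Theorem \ref{propdecompositionC} and Property $(\ast)$, whose underlying constructions are exactly the two you spell out (extending the per-color supports $N^x$ to maximal independent sets $S^x$ for the forward inclusion, and realizing a vector of $\overrightarrow{W}_{max}(G,L)$ by coloring each $S^x$ monochromatically with $x$ and then deleting excess colors via the subcoloring operation ${\mathcal C}^-$ for the reverse one). Your write-up is just a self-contained inlining of those ingredients, with the merging and descent steps made explicit.
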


\begin{proof}
Let $G$ be a  graph and $L$ be a  list of $G$. If $\vec{w}\in \overrightarrow{W}(G,L)$ then ${\mathcal C}(G,L,w) \not= \emptyset$. By  Theorem \ref{propdecompositionC}, there exists $\vec{w}' \in \overrightarrow{W}_{max}(G,L)$ such that $\vec{w}' \geq\vec{w}$. Then $\overrightarrow{W}(G,L) \subset R(\overrightarrow{W}_{max}(G,L))$.\\
If $\vec{0} \not= \vec{w} \in R(\overrightarrow{W}_{max}(G,L))$, by construction there exists $\vec{w}' \in \overrightarrow{W}_{max}(G,L)$ such that $\vec{w}' \geq\vec{w}$. By Theorem \ref{propdecompositionC} and Property $(\ast)$
we have ${\mathcal C}(G,L,w) \not= \emptyset$, therefore $\vec{w}\in \overrightarrow{W}(G,L)$, and since $\vec{0} \in \overrightarrow{W}(G,L)$ we have the reverse inclusion.
\end{proof}

%%%%%%%%%%%%%%%%%%%%%%%%%%%%%%%%%%%%%%%%%%%%%%%%%%%%%%%%%%%%%%%%%%

%%%%%%%%%%%%%%%%%%%%%%%%%%%%%%%%%%%%%%%%%%%%%%%%%%%%%%%%%%%%%%%%%%%
%%%%%%%%%%%%%%%%%%%%%%%%%%%%%%%%%%%%%%%%%%%%%%%%%%%%%%%%%%%%%%%%%%%

Theorem \ref{W(G,L)} can be written in a nice way in the particular case of a $a$-uniform list ${\mathcal L}_a$ (i.e.
${\mathcal L}_a(v)=\{1,2,\ldots,a\}$ for any $v$).

\begin{corollary}
\label{listeidentique}
Let $G$ be a graph with $m$ maximal independent sets $S_1,\ldots,S_m$. Then, we have:
$$\overrightarrow{W}(G,{\mathcal L}_a)=\{\sum_{i=1}^mx_i\vec{S}_i,\ \ {\rm with}\ x_i\in{\mathbb N}\ {\rm and} \ \sum_{i=1}^mx_i\leq a\}.$$

\end{corollary}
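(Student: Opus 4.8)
The plan is to specialize Theorem~\ref{W(G,L)} to the $a$-uniform list ${\mathcal L}_a$ and unwind the definition of $\overrightarrow{W}_{max}(G,{\mathcal L}_a)$. The key observation is that for the uniform list, every vertex has every color $x\in\{1,\ldots,a\}$ in its list, so the color subgraph $G^x$ equals $G$ itself for each of the $a$ colors, and $\overline{{\mathcal L}_a}=\{1,\ldots,a\}$ has exactly $a$ elements. Consequently $\overrightarrow{MIS}(G^x)=\overrightarrow{MIS}(G)=\{\vec{S}_1,\ldots,\vec{S}_m\}$ for each $x$, and the vectorial sum defining $\overrightarrow{W}_{max}$ becomes a sum of $a$ identical copies of this same set.

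First I would record that $\overrightarrow{W}_{max}(G,{\mathcal L}_a)=\sum_{x=1}^a \overrightarrow{MIS}(G)$, an $a$-fold vectorial sum. By the definition of the vectorial sum, an element of this set is precisely a vector of the form $\vec{S}_{j_1}+\cdots+\vec{S}_{j_a}$ where each index $j_t\in\{1,\ldots,m\}$ is chosen independently. Grouping equal indices, every such sum can be rewritten as $\sum_{i=1}^m x_i\vec{S}_i$ where $x_i$ counts how many of the $a$ chosen maximal independent sets equal $S_i$; hence the $x_i$ are nonnegative integers with $\sum_{i=1}^m x_i=a$. Conversely any choice of nonnegative integers summing to exactly $a$ arises this way. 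Therefore
$$\overrightarrow{W}_{max}(G,{\mathcal L}_a)=\Bigl\{\sum_{i=1}^m x_i\vec{S}_i \ \Big| \ x_i\in{\mathbb N},\ \sum_{i=1}^m x_i=a\Bigr\}.$$

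Next I would apply Theorem~\ref{W(G,L)}, which gives $\overrightarrow{W}(G,{\mathcal L}_a)=R(\overrightarrow{W}_{max}(G,{\mathcal L}_a))$, and show that passing to the hyperrectangle exactly relaxes the equality constraint $\sum x_i=a$ to the inequality $\sum x_i\le a$. The inclusion $\subseteq$ of the claimed description into $R(\overrightarrow{W}_{max})$ is the substantive direction: given $\vec{w}=\sum_{i=1}^m x_i\vec{S}_i$ with $\sum x_i\le a$, I must produce a point $\vec{w}'\in\overrightarrow{W}_{max}(G,{\mathcal L}_a)$ with $\vec{w}'\ge\vec{w}$, i.e.\ increase the coefficients $x_i$ up to coefficients summing to exactly $a$. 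Since $\vec{S}_i\in{\mathbb N}^n$ has nonnegative entries, adding extra copies of any $\vec{S}_i$ only increases each coordinate, so setting $x_i'=x_i$ for $i>1$ and $x_1'=x_1+(a-\sum_i x_i)$ yields $\vec{w}'=\sum x_i'\vec{S}_i\ge\vec{w}$ with $\sum x_i'=a$; thus $\vec{w}\le\vec{w}'$ lies in the hyperrectangle. The reverse inclusion $\supseteq$ is where I expect the only real care to be needed: if $\vec{w}\le\vec{w}'=\sum x_i'\vec{S}_i$ with $\sum x_i'=a$, I must exhibit a representation $\vec{w}=\sum y_i\vec{S}_i$ with $\sum y_i\le a$. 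This is the main obstacle, because $\vec{w}\le\vec{w}'$ coordinatewise does not immediately give such coefficients $y_i$. To handle it I would appeal to Theorem~\ref{W(G,L)} itself: since $\vec{w}\le\vec{w}'$ puts $\vec{w}$ in the hyperrectangle $R(\overrightarrow{W}_{max})=\overrightarrow{W}(G,{\mathcal L}_a)$, the graph $G$ is $({\mathcal L}_a,w)$-colorable, and then Proposition~\ref{decomposition} together with the fact that each color class $C^x$ is an independent set of $G^x=G$ decomposes the coloring into at most $a$ independent sets; taking $y_i$ to be the number of color classes contained in $S_i$ (each such class being a subset of some maximal independent set, whose contribution is dominated by $\vec{S}_i$) gives the required representation with $\sum y_i\le a$, completing the proof.
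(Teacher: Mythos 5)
Your computation of $\overrightarrow{W}_{max}(G,{\mathcal L}_a)=\{\sum_{i=1}^m x_i\vec{S}_i \mid x_i\in{\mathbb N},\ \sum_i x_i=a\}$ and your padding argument for the inclusion $\{\sum_i x_i\vec{S}_i \mid \sum_i x_i\le a\}\subseteq R(\overrightarrow{W}_{max}(G,{\mathcal L}_a))$ are correct and match the first part of the paper's proof. The gap is exactly where you predicted it, in the reverse inclusion, and your attempted fix does not work. From an $({\mathcal L}_a,w)$-coloring you obtain $\vec{w}=\sum_{x=1}^a\vec{N}^x$ where each color class $N^x$ is an independent set contained in some maximal independent set $S_{i(x)}$; but containment gives only $\vec{N}^x\le\vec{S}_{i(x)}$, hence $\vec{w}\le\sum_i y_i\vec{S}_i$, not the equality $\vec{w}=\sum_i y_i\vec{S}_i$ that your last sentence claims. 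This step cannot be repaired, because the inclusion $R(\overrightarrow{W}_{max}(G,{\mathcal L}_a))\subseteq\{\sum_i x_i\vec{S}_i \mid \sum_i x_i\le a\}$ is false in general: take $G=P_3$ with edges $v_1v_2$ and $v_2v_3$, and $a=1$, so that $\vec{S}_1=(1,0,1)$ and $\vec{S}_2=(0,1,0)$. The weight-vector $\vec{w}=(1,0,0)$ is permissible (color $v_1$ with the unique color and give $v_2,v_3$ nothing), so it lies in $\overrightarrow{W}(G,{\mathcal L}_1)=R(\overrightarrow{W}_{max})$; yet $(1,0,0)=x_1(1,0,1)+x_2(0,1,0)$ has no solution with $x_1,x_2\in{\mathbb N}$, since the first coordinate forces $x_1=1$ and the third forces $x_1=0$.

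You should know that the paper's own proof does not do better at this point: it computes $\overrightarrow{W}_{max}(G,{\mathcal L}_a)$ exactly as you do and then simply asserts that ``taking the hyperrectangle'' yields the displayed set, silently identifying $R(\{\sum_i x_i\vec{S}_i \mid \sum_i x_i=a\})$ with $\{\sum_i x_i\vec{S}_i \mid \sum_i x_i\le a\}$. Your instinct that this identification is the substantive point was right; in fact only one inclusion of it holds (the one your padding argument gives), and the example above shows the corollary itself is false as stated (it is correct, e.g., when every maximal independent set is a singleton, as in complete graphs, but not in general). What actually follows from Theorem~\ref{W(G,L)} is the weaker-looking but correct statement $\overrightarrow{W}(G,{\mathcal L}_a)=R\bigl(\{\sum_{i=1}^m x_i\vec{S}_i \mid x_i\in{\mathbb N},\ \sum_{i=1}^m x_i=a\}\bigr)$, i.e.\ the set of vectors \emph{dominated} by such combinations, and your proof up to the final step establishes precisely this.
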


\begin{proof}
By definition we have $\overrightarrow{W}_{max}(G,{\mathcal L}_a)=\sum_{x \in \overline{L} } \overrightarrow{MIS}(G^x)$.
In the case of the $a$-uniform list ${\mathcal L}_a$, the subgraph $G^x$ is equal to $G$ for any color $x\in
\overline{L}$ thus  $\overrightarrow{W}_{max}(G,{\mathcal L}_a)=\sum_{i=1}^a \overrightarrow{MIS}(G)$. Since
$\overrightarrow{MIS}(G)=\{\vec{S}_1,\ldots,\vec{S}_m\}$, we obtain
$$\overrightarrow{W}_{max}(G,{\mathcal L}_a)=\{\sum_{i=1}^mx_i\vec{S}_i,\ \ {\rm with}\ x_i\in{\mathbb N}\ {\rm and} \ \sum_{i=1}^mx_i=a\}.$$
Then Theorem \ref{W(G,L)} concludes the proof taking the hyperrectangle of $\overrightarrow{W}_{max}(G,{\mathcal L}_a)$.
\end{proof}
%%%%%%%%%%%%%%%%%%%%%%%%%%%%%%%%%%%%%%%%%%%%%%%%%%%%%%%%%%%%%%%%%%

%%%%%%%%%%%%%%%%%%%%%%%%%%%%%%%%%%%%%%%%%%%%%%%%%%%%%%%%%%%%%%%%%%%
%%%%%%%%%%%%%%%%%%%%%%%%%%%%%%%%%%%%%%%%%%%%%%%%%%%%%%%%%%%%%%%%%%%

\section{The static channel assignment problem} 
\label{static}

Let $G$ be a graph and $w$ a weight of $G$. 
Then, the weighted chromatic number $\chi(G,w)$ of $G$ associated to the weight $w$ is defined to be the smallest integer $a$ such that $G$ is $({\mathcal L}_a,w)$-colorable: it is the minimum number 
of colors for which there exists a proper weighted $a$-coloring (see \cite{CaramiaFiala2004} for a recent study of this number).
In the particular case where the weight $w$ is defined by $w(v)=1$ for all vertex $v$ of $G$ (i.e. if the weight-vector $\vec{w}=(1,\ldots,1)$) then
$\chi(G,w)$ is nothing but the chromatic number $\chi(G)$ of $G$.

The static channel assignment problem can then be viewed as follows: for a given graph $G$ and a given weight $w$ of $G$, find the 
weighted chromatic number $\chi(G,w)$ and furthermore find an $({\mathcal L}_{\chi(G,w)},w)$-coloring of $G$.

Our strategy to solve the static assignment problem is the following one.

Firstly, we want to find the weighted chromatic number $\chi(G,w)$. We consider the list ${\mathcal L}_1$ of $G$ and we compute 
$$\overrightarrow{W}_{max}(G,{\mathcal L}_1) := \sum_{x \in \overline{{\mathcal L}_1} } \overrightarrow{MIS}(G^x)= \overrightarrow{MIS}(G).$$

We check whether $\vec{w}\in R(\overrightarrow{W}_{max}(G,{\mathcal L}_1))$, i.e. if there exists $\vec{w}'\in
\overrightarrow{W}_{max}(G,{\mathcal L}_1)$ such that $\vec{w}\leq \vec{w}'$.

If the answer is positive then we have $\chi(G,w)=1$ by Theorem \ref{W(G,L)}. Otherwise, we compute $\overrightarrow{W}_{max}(G,{\mathcal L}_2)$.                        

\begin{remark}
Note that, since $\chi(G,w)\geq \frac{\| \vec{w}\| }{\alpha(G)}$ where $\alpha(G)$ is the independence number of $G$ i.e. the size of the largest independent set of $G$, we can begin with the computation of $\overrightarrow{W}_{max}(G,{\mathcal L}_{\frac{\| \vec{w}\| }{\alpha(G)}})$.

Note also that since our lists ${\mathcal L}_a$ are $a$-uniform lists, we have by Corollary \ref{listeidentique} an easy
description of the set $\overrightarrow{W}_{max}(G,{\mathcal L}_a)$.

\end{remark}

Hence we find $a= \chi(G,w)$ as soon as we find $\vec{w}'\in \overrightarrow{W}_{max}(G,{\mathcal L}_a)$ such that $\vec{w}'\geq \vec{w}$ and such that

$$\vec{w}'=\sum_{i=1}^a \vec{z}_i \in \sum_{i=1}^a \overrightarrow{MIS}(G).$$

By Theorem \ref{propdecompositionC}, we can construct an $({\mathcal L}_a,w')$-coloring $C'$ of $G$.

But we have by Proposition \ref{decomposition}:

$$\tilde{C'}=\tilde{\bigcup}_{x=1}^a\tilde{C'^x} \ \ {\rm and}\  \ \vec{w}(C'^x)=\vec{z}_x.$$

By Theorem  \ref{propdecompositionC}, it is sufficient to take one $C\in  {\mathcal C}^-(C',\vec{w}'-\vec{w})$ to get a  $({\mathcal L}_a,w)$-coloring  of $G$.

%%%%%%%%%%%%%%%%%%%%%%%%%%%%%%%%%%%%%%%%%%%%%%%%%%%%%%%%%%%%%%%%%%

%%%%%%%%%%%%%%%%%%%%%%%%%%%%%%%%%%%%%%%%%%%%%%%%%%%%%%%%%%%%%%%%%%%
%%%%%%%%%%%%%%%%%%%%%%%%%%%%%%%%%%%%%%%%%%%%%%%%%%%%%%%%%%%%%%%%%%%

\section{The on call problem} 
\label{The set of all minimal rejected weights}

The {\sl on call problem} can be modelized as follows : for $\vec{w} \notin \overrightarrow{W}(G,L)$, find $\vec{w}^{*} \in \overrightarrow{W}(G,L)$ such that $\vec{w}^{*}\leq  \vec{w}$ and $\| \vec{w}-\vec{w}^{*} \|$ is minimal.
We define the vector $\min(\vec{x},\vec{y})$ as the vector  $\vec{z}$ such that $z_i=\min(x_i,y_i)$ for all $i \in \{1,\dots,n\}$. Now for a fixed vector $\vec w$ we define the set
$$\min(\vec{w}, \overrightarrow{W}_{max}(G,L)):=\{ \min(\vec{w},\vec{w}')  \mid  \vec{w}' \in  \overrightarrow{W}_{max}(G,L)\}.$$

\begin{theorem}[]
\label{rejected}
Let $G$ be a graph,  $L$ a list of $G$ and $w$ a weight of $G$. Then  
$\vec{w}^{*}$ is a solution to the {\sl on call problem} if and only if
$\vec{w}^{*}$ is a vector of $\min(\vec{w}, \overrightarrow{W}_{max}(G,L))$ of maximal norm i.e. 
$\vec{w}^{*}\in \min(\vec{w}, \overrightarrow{W}_{max}(G,L))$ such that $ \forall\vec{w}'\in \min(\vec{w}, \overrightarrow{W}_{max}(G,L))$, we have 
$\| \vec{w}^{*} \| \geq \| \vec{w}' \|.$
\end{theorem}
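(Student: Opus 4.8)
The plan is to translate the on call problem into a norm-maximization over a feasible set and then to identify that set's optimum with $\min(\vec{w},\overrightarrow{W}_{max}(G,L))$ via the hyperrectangle description of Theorem \ref{W(G,L)}. First I would record a reformulation: since $\|\cdot\|$ is the sum of the coordinates and any admissible $\vec{w}^{*}$ satisfies $\vec{w}^{*}\leq \vec{w}$, the vector $\vec{w}-\vec{w}^{*}$ has nonnegative coordinates, so $\|\vec{w}-\vec{w}^{*}\|=\|\vec{w}\|-\|\vec{w}^{*}\|$. Hence minimizing $\|\vec{w}-\vec{w}^{*}\|$ over the set $F:=\{\vec{u}\in\overrightarrow{W}(G,L)\mid \vec{u}\leq\vec{w}\}$ is exactly maximizing $\|\vec{u}\|$ over $F$; a solution to the on call problem is precisely a vector of $F$ of maximal norm. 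It therefore suffices to show that the vectors of $F$ of maximal norm are exactly the vectors of $\min(\vec{w},\overrightarrow{W}_{max}(G,L))$ of maximal norm.

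Next I would prove the inclusion $\min(\vec{w},\overrightarrow{W}_{max}(G,L))\subseteq F$. Let $\vec{w}'\in\overrightarrow{W}_{max}(G,L)$ and put $\vec{v}=\min(\vec{w},\vec{w}')$. By definition $\vec{v}\leq\vec{w}'$, so $\vec{v}\in R(\overrightarrow{W}_{max}(G,L))$, which by Theorem \ref{W(G,L)} equals $\overrightarrow{W}(G,L)$; moreover $\vec{v}\leq\vec{w}$, so $\vec{v}\in F$. In particular every element of $\min(\vec{w},\overrightarrow{W}_{max}(G,L))$ is admissible, and the maximal norm attained on $\min(\vec{w},\overrightarrow{W}_{max}(G,L))$ is at most the maximal norm attained on $F$.

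The key step is the reverse domination: for each $\vec{u}\in F$ I would exhibit a vector of $\min(\vec{w},\overrightarrow{W}_{max}(G,L))$ lying above it. Indeed $\vec{u}\in\overrightarrow{W}(G,L)=R(\overrightarrow{W}_{max}(G,L))$ by Theorem \ref{W(G,L)}, so there is $\vec{w}'\in\overrightarrow{W}_{max}(G,L)$ with $\vec{u}\leq\vec{w}'$; combined with $\vec{u}\leq\vec{w}$ this gives $\vec{u}\leq\min(\vec{w},\vec{w}')\in\min(\vec{w},\overrightarrow{W}_{max}(G,L))$, whence $\|\vec{u}\|\leq\|\min(\vec{w},\vec{w}')\|$. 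Consequently the two sets attain the same maximal norm, call it $N^{*}$. The ($\Leftarrow$) direction is then immediate: a vector of $\min(\vec{w},\overrightarrow{W}_{max}(G,L))$ of maximal norm lies in $F$ and has norm $N^{*}$, so it is a solution. For ($\Rightarrow$), given a solution $\vec{w}^{*}\in F$ with $\|\vec{w}^{*}\|=N^{*}$, apply the domination step to get $\vec{v}\in\min(\vec{w},\overrightarrow{W}_{max}(G,L))$ with $\vec{w}^{*}\leq\vec{v}$; then $N^{*}\leq\|\vec{v}\|\leq N^{*}$ forces $\|\vec{v}\|=\|\vec{w}^{*}\|$.

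I expect the only genuine subtlety to be this last implication: that $\vec{w}^{*}\leq\vec{v}$ together with $\|\vec{w}^{*}\|=\|\vec{v}\|$ forces $\vec{w}^{*}=\vec{v}$, so that the solution itself belongs to $\min(\vec{w},\overrightarrow{W}_{max}(G,L))$. This is where it matters that $\|\cdot\|$ is the sum of the coordinates on ${\mathbb N}^n$: the coordinates of $\vec{v}-\vec{w}^{*}$ are nonnegative and sum to $\|\vec{v}\|-\|\vec{w}^{*}\|=0$, hence all vanish. Everything else is a direct transcription of the hyperrectangle characterization of Theorem \ref{W(G,L)}, so the main conceptual work is simply recognizing that the coordinatewise minimum of $\vec{w}$ with a generator of $\overrightarrow{W}_{max}(G,L)$ automatically lands in the hyperrectangle.
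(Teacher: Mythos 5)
Your proof is correct and follows essentially the same route as the paper's: both rest on the hyperrectangle characterization of Theorem \ref{W(G,L)} to show that $\min(\vec{w},\vec{w}')$ lies in $\overrightarrow{W}(G,L)$, that every admissible vector is dominated by such a minimum, and on the identity $\|\vec{w}-\vec{u}\|=\|\vec{w}\|-\|\vec{u}\|$ for $\vec{u}\leq\vec{w}$ to convert distance minimization into norm maximization. Your reorganization via the feasible set $F$ and the common maximal norm $N^{*}$ is a cleaner write-up (in particular you make explicit the coordinatewise argument forcing $\vec{w}^{*}=\vec{v}$, which the paper compresses into ``by the minimality property''), but it is the same argument in substance.
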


\begin{proof}
For any graph $G$, any list $L$ of $G$, and any weight $w$ of $G$, if $\vec{w}^{*} $ is a solution to the {\sl on call problem}, then $\vec{w}^{*} \in \overrightarrow{W}(G,L)$. By Theorem \ref{W(G,L)}, there exists $\vec{w}' \in \overrightarrow{W}_{max}(G,L)$ such that $\vec{w}^{*} \leq \vec{w}'$. Since by definition, $\vec{w}^{*} \leq \vec{w}$,  then   $\vec{w}^{*} \leq  \min(\vec{w},\vec{w}') \leq \vec{w}$, and therefore $ \| \vec{w}- \min(\vec{w},\vec{w}') \| \leq \| \vec{w}- \vec{w}^{*} \|$. 

By the minimality property, we have $\vec{w}^{*}= \min(\vec{w},\vec{w}') \in \min(\vec{w},\overrightarrow{W}_{max}(G,L)$. Moreover, for any $\vec{w}'' \in \min(\vec{w},\overrightarrow{W}_{max}(G,L) $, since $\vec{w}\geq \vec{w}^{*}$ and $\vec{w}>\vec{w}''$,
 we have 
$$ \| \vec{w}- \vec{w}^{*} \|  =
\| \vec{w}\| - \|\vec{w}^{*} \|
\leq 
\| \vec{w}\| - \|\vec{w}'' \|
=
 \| \vec{w}- \vec{w}'' \|,$$
thus $\| \vec{w}^{*} \| \geq \| \vec{w}'' \| $.\\

Let $\vec{w}^{*}$ be a solution to the {\sl on call problem} and let
 $\vec{w}^{**} \in \min(\vec{w}, \overrightarrow{W}_{max}(G,L))$ such that $ \forall\vec{w}'\in \min(\vec{w}, \overrightarrow{W}_{max}(G,L))$, we have 
$\| \vec{w}^{**} \| \geq \| \vec{w}'\|$. We have $\vec{w}^{**}=\min(\vec{w},\vec{u})$ with $\vec{u}\in\overrightarrow{W}_{max}(G,L)$, hence $\vec{w}^{**}\in R(\overrightarrow{W}_{max}(G,L))$ thus $\vec{w}^{**}\in \overrightarrow{W}(G,L)$ by Theorem \ref{W(G,L)}.

 By construction $\vec{w} \geq \vec{w}^{**} \in \overrightarrow{W}(G,L)$ such that $\| \vec{w}-\vec{w}^{**}\|=\| \vec{w}-\vec{w}^{*} \|$, therefore $\vec{w}^{**}$ is also a solution to the {\sl on call problem}.
\end{proof}

%%%%%%%%%%%%%%%%%%%%%%%%%%%%%%%%%%%%%%%%%%%%%%%%%%%%%%%%%%%%%%%%%%%
%%%%%%%%%%%%%%%%%%%%%%%%%%%%%%%%%%%%%%%%%%%%%%%%%%%%%%%%%%%%%%%%%%%

%%%%%%%%%%%%%%%%%%%%%%%%%%%%%%%%%%%%%%%%%%%%%%%%%%%%%%%%%%%%%%

\section{Non-recoloring problem}
\label{nonrecoloring}

Let $G$ be a graph, $w_0$ a weight of $G$ and $a_0=\chi(G,w_0)$ the weighted chromatic number of $G$ associated to $w_0$. Now, consider the list  ${\mathcal L}_{a_0}$ of $G$ and $C_0$ an $({\mathcal L}_{a_0},w_0)$-coloring of $G$. Finally let $w$ be a weight  of $G$ such that $\vec{w} \geq \vec{w}_0$.

The non-recoloring problem is to find the smallest $a$ (denoted by $\chi(G,w,C_0)$) such that there exists an $({\mathcal L}_a,w)$-coloring $C$ of $G$ such that $C_0$ is a subcoloring of $C$. In other words 
$$C_0 \in \mathcal{C}^-(C,\vec{w}-\vec{w}_0).$$

We define the set
 $\overrightarrow{W}_{max}(G,{\mathcal L}_{a_0},C_0)$ to be the set of vectors $\vec{w}_1 \in \overrightarrow{W}_{max}(G,L_{a_0})$ such that there exists an $({\mathcal L}_{a_0},w_1)$-coloring $C_1$ of $G$ such that 
$C_0 \in \mathcal{C}^-(C_1,\vec{w_1}-\vec{w}_0)$.

By definition we have
$\overrightarrow{W}_{max}(G,{\mathcal L}_{a_0},C_0) \subset \overrightarrow{W}_{max}(G,{\mathcal L}_{a_0})$

Then we set
$$\vec{\delta}(G,{\mathcal L}_{a_0},C_0,w) := \{ \vec{w} - \min(\vec{w},\vec{w}_1) \mid \vec{w}_1 \in \overrightarrow{W}_{max}(G,{\mathcal L}_{a_0},C_0) \}.$$

\begin{theorem}
\label{thnonrec}
If $G$ is a graph with the above notation then 
$$ \chi(G,w,C_0) \le a_0 + \min_{\vec{w}' \in \vec{\delta}(G,{\mathcal L}_{a_0},C_0,w) } \chi(G,w').$$
\end{theorem}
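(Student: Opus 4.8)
The plan is to choose a witness in $\overrightarrow{W}_{max}(G,{\mathcal L}_{a_0},C_0)$ that realizes the minimum, use the old palette $\{1,\dots,a_0\}$ to cover as much of $w$ as this witness allows while keeping $C_0$ inside, and then pay for the remaining deficit with a fresh block of colors. First I would fix a vector $\vec{w}_1\in\overrightarrow{W}_{max}(G,{\mathcal L}_{a_0},C_0)$ attaining $\min_{\vec{w}'\in\vec{\delta}(G,{\mathcal L}_{a_0},C_0,w)}\chi(G,w')$, and set $\vec{w}'=\vec{w}-\min(\vec{w},\vec{w}_1)$, so that $w'(v_i)=\max(0,w(v_i)-w_1(v_i))$ at each vertex. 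By definition of $\overrightarrow{W}_{max}(G,{\mathcal L}_{a_0},C_0)$ there is an $({\mathcal L}_{a_0},w_1)$-coloring $C_1$ of $G$ with $C_0\in{\mathcal C}^-(C_1,\vec{w}_1-\vec{w}_0)$; in particular $C_0(v)\subset C_1(v)$ and $w_1(v)\geq w_0(v)$ for every vertex $v$.

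Next I would trim $C_1$ to a coloring $C'$ on the palette $\{1,\dots,a_0\}$ of weight exactly $\min(\vec{w},\vec{w}_1)$ that still contains $C_0$. At each vertex $v$ the hypotheses $w(v)\geq w_0(v)$ and $w_1(v)\geq w_0(v)$ give $\min(w(v),w_1(v))\geq w_0(v)$, so one may choose $C'(v)\subset C_1(v)$ with $C_0(v)\subset C'(v)$ and $\vert C'(v)\vert=\min(w(v),w_1(v))$. Being a pointwise subset of the proper coloring $C_1$, the map $C'$ keeps adjacent color sets disjoint, hence is a valid $({\mathcal L}_{a_0},\min(\vec{w},\vec{w}_1))$-coloring of $G$ extending $C_0$.

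Then I would cover the residual weight $\vec{w}'$ with fresh colors. By definition of the weighted chromatic number there is an $({\mathcal L}_{\chi(G,w')},w')$-coloring of $G$, which I relabel by shifting every color by $a_0$ to obtain a coloring $C''$ using only colors in $\{a_0+1,\dots,a_0+\chi(G,w')\}$. Setting $C(v):=C'(v)\cup C''(v)$ and using that the two palettes are disjoint, one checks $\vert C(v)\vert=\min(w(v),w_1(v))+w'(v)=w(v)$ and that adjacent vertices still receive disjoint sets, so $C$ is an $({\mathcal L}_{a_0+\chi(G,w')},w)$-coloring of $G$. Finally $C_0(v)\subset C'(v)\subset C(v)$ shows $C_0\in{\mathcal C}^-(C,\vec{w}-\vec{w}_0)$, whence $\chi(G,w,C_0)\leq a_0+\chi(G,w')$; minimizing over $\vec{w}'\in\vec{\delta}(G,{\mathcal L}_{a_0},C_0,w)$ yields the claim.

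The main obstacle is the trimming step: I must ensure that at every vertex the target size $\min(w(v),w_1(v))$ is at least $w_0(v)$, so that a subset of $C_1(v)$ of that size containing $C_0(v)$ genuinely exists. This is exactly where the standing hypothesis $\vec{w}\geq\vec{w}_0$ combines with $w_1(v)\geq w_0(v)$ (coming from $C_0\in{\mathcal C}^-(C_1,\vec{w}_1-\vec{w}_0)$). Everything else reduces to the routine observation that the union of two proper colorings on disjoint palettes is again a proper coloring.
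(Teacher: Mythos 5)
Your proof is correct and takes essentially the same route as the paper: you fix the vector of $\overrightarrow{W}_{max}(G,{\mathcal L}_{a_0},C_0)$ whose residual $\vec{w}-\min(\vec{w},\vec{w}_1)$ minimizes the weighted chromatic number, trim the associated $({\mathcal L}_{a_0},w_1)$-coloring down to weight $\min(\vec{w},\vec{w}_1)$ while keeping $C_0$ inside, and color the residual weight with the fresh palette $\{a_0+1,\dots,a_0+\chi(G,w')\}$, exactly as in the paper's construction of $C_3$ and $C_4$. The only divergence is that you make explicit the trimming verification $\min(w(v),w_1(v))\geq w_0(v)$, a detail the paper asserts without proof.
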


\begin{proof}
%We first prove the inequality $\leq$.\\
Let $a= a_0 + \min_{\vec{w}' \in \vec{\delta}(G,{\mathcal L}_{a_0},C_0,w) } \chi(G,w')$. Let  $\vec{w}^* \in \vec{\delta}(G,{\mathcal L}_{a_0},C_0,w) $ such that
$$ \chi(G,w^*)=\min_{\vec{w}' \in \vec{\delta}(G,{\mathcal L}_{a_0},C_0,w) } \chi(G,w').$$
Then there exists  $\vec{w}^{**} \in \overrightarrow{W}_{max}(G,{\mathcal L}_{a_0},C_0) $ such that
$$ \vec{w}^* = \vec{w} - \min(\vec{w},\vec{w}^{**}).$$

Since $\vec{w} \geq \vec{w_0} $ and by definition of $\vec{w}^{**}$, the vector $\vec{w}_{3} := \min(\vec{w},\vec{w}^{**}) \geq \vec{w}_0$, we can thus construct a   $({\mathcal L}_{a_0},w_{3})$-coloring $C_3$ of $G$ such that   $C_0 \in \mathcal{C}^-(C_3,\vec{w}_{3}-\vec{w}_0)$. By our construction, choosing the colors in the set $[a_0+1,a]$, we can construct an $({\mathcal L}_{a-a_0},w^*)$-coloring $C_4$ of $G$. Finally we construct $C$ such that 
 $\tilde{C}:=\tilde{C}_3 \tilde{\cup} \tilde{C}_4$, thus  $C$ is an $({\mathcal L}_a,w)$-coloring of $G$ and thus
$\chi(G,w,C_0) \leq a$.
% To prove the converse, let  $\chi(G,w,C_0):=a'$. There exists a   $({\mathcal L}_{a'},w)$-coloring $C$ of $G$. We set $\tilde{C}=\tilde{C}_3 \tilde{\cup} \tilde{C}_4$ such that $C_0$ is a subcoloring of $C_3$ and $\overline{C}_3$ has $a_0$ colors. By definition there exists $\vec{w}^{**} \in \overrightarrow{W}_{max}(G,{\mathcal L}_{a_0},C_0)$ such that $\vec{w}^{**} \geq \vec{w}(C_3)$, hence  
% $$ \min(\vec{w},\vec{w}^{**}) \geq \min(\vec{w},\vec{w}(C_3))= \vec{w}(C_3).$$
% Since
% $$\vec{w}= \min(\vec{w},\vec{w}^{**}) + ( \vec{w} - \min(\vec{w},\vec{w}^{**}) ) $$
% we get
% $$ a' \geq a_0 + \min_{\vec{w}' \in \vec{\delta}(G,{\mathcal L}_{a_0},C_0,w) } \chi(G,w').$$
\end{proof}

We believe that the inequality in Theorem~\ref{thnonrec} can be replaced by an equality but we have not been able to
prove it.

\section{Algorithmic considerations}
\label{Algorithmic considerations}

Proofs of Theorems ~\ref{propdecompositionC}, \ref{W(G,L)}, \ref{rejected} and \ref{thnonrec} are all constructive and
thus algorithms
can be derived from them. But, of course, since the problems considered are all NP-complete, there is little hope for
polynomial complexity.

By Theorem ~\ref{W(G,L)}, the problem of knowing, for a given graph $G$ with a list assignment $L$, if a given weight
$\vec{w}$ is permissible (i.e. if $G$ is $(L,w)$-colorable) reduces to that of constructing the set
$\overrightarrow{W}_{max}(G,L)$ (if the set is generated vector by vector, then we can stop as soon as a vector
$\vec{w'}$ with $w'\ge w$ is output).
\smallskip

\begin{algorithm}[ht]
\label{algo1}
\SetKwFunction{FW}{Function $\overrightarrow{W}_{max}(G, L)$}
\SetKwFunction{FS}{Function VecSum($S,S',c$)}
\SetKwFunction{S}{VecSum($W, M, i$)}
%\FW

\KwData{graph $G$; list $L$;}
\KwResult{the vector set $\overrightarrow{W}_{max}(G,L)$}
%  \Type{c}{integer};
\Begin{
M $\leftarrow$ $MIS(G)$\;
W $\leftarrow$ $\{(0,0,\ldots, 0)\}$\;
 
 \For{$i \in{\bar L}$}{
   W $\leftarrow$ \S \;
     }
 return W\;
%\caption{${W}_{max}(G, L)$}
%\end{function}
}
%It uses the following one which computes the vectorial sum of two sets:

%\begin{function}[ht]
%\label{algo2}
\BlankLine
\BlankLine
\FS
\tcp*[l]{computes the vectorial sum of $S$ and the restriction of $S'$ to $G^c$}
%\KwData{vector sets $S,S'$; integer $c$}
%\KwResult{the vectorial sum of $S$ and the restriction of $S'$ to $G^c$}
%  \Type{c}{integer};
\Begin{
T $\leftarrow$ $\emptyset$\;
 \For{$s\in S$}{
\For{$s'\in S'$}{
$s'^{c}$ $\leftarrow$ the restriction of $s'$ to $G^c$\;
   \If{$s'^{c}$ is maximal in $G^c$}{T $\leftarrow T\cup \{s+s'^{c}\}$;}}}
 return T\;
}
\caption{computing ${W}_{max}(G,L)$}
\end{algorithm}
\smallskip

For the problem of given a graph $G$ of order $n$ with a list $L$, listing the vector set
$\overrightarrow{W}_{max}(G,L)$, a
good measure of performance is the time required compared with the size $m$ of $\overrightarrow{W}_{max}(G,L)$. The set
$\overrightarrow{MIS}(G)$ of all maximal independent vectors of $G$ can be constructed in time within a polynomial
factor of its
size~\cite{Tsu77} (that can be as large as $n^\frac{n}{3}=(1.44225)^n$). Then, for any color $x$, the set
$\overrightarrow{MIS}(G^x)$ can be computed 'on the fly' by checking if the restriction of each vector of
$\overrightarrow{MIS}(G)$ to $G^x$ is maximal (clearly, for any independent set of $G$, its restriction to $G^x$ is
also an independent set). Checking the maximality can be done in $O(n²)$ operations. Algorithm~\ref{algo1} describes the
steps to compute the set $\overrightarrow{W}_{max}(G,L)$. Its time complexity in the worst case is in $O(m^\ell)$, were
$\ell=|{\bar L}|$.

%Therefore, the time complexity of computing $\overrightarrow{W}_{max}(G,L)$ with this method is in $O(m^\ell)$, were
%$\ell=|{\bar L}|$.

 %%
%%%%%%%%%%%%%%%%%%%%%%%%%%%%%%%%%%%%%%%%%%%%%%%%%%%%%%%%%%%%%%%%%%%%
%%%%%%%%%%%%%%%%%%%%%%%%%%%%%%%%%%%%%%%%%%%%%%%%%%%%%%%%%%%%%%%%%%%%

%%%%%%%%%%%%%%%%%%%%%%%%%%%%%%%%%%%%%%%%%%%%%%%%%%%%%%%%%%%%%%%%%%%%

%%%%%%%%%%%%%%%%%%%%%%%%%%%%%%%%%%%%%%%%%%%%%%%%%%%%%%%%%%%%%%%%%%

%%%%%%%%%%%%%%%%%%%%%%%%%%%%%%%%%%%%%%%%%%%%%%%%%%%%%%%%%%%%

%%%%%%%%%%%%%%%%%%%%%%%%%%%%%%%%%%%%%%%%%%%%%%%%%%%%%%%%%%%%

\end{document}